\documentclass[hidelinks,onefignum,onetabnum]{siamart251216}

\usepackage{amsmath,amssymb,amsfonts,booktabs,algorithm,algorithmic}
\usepackage{array}
\newsiamremark{remark}{Remark}
\newsiamremark{example}{Example}

\newcommand{\mcM}{\mathcal{M}}
\newcommand{\Vd}{V^{(d)}}
\newcommand{\binomc}[2]{\binom{#1}{#2}}

\headers{Multi-index of multivariate Vandermonde matrices}
        {Samuel Santos-P\'erez}
\title{Multi-Index Recovery for Total-Degree Multivariate Vandermonde Matrices%
\thanks{Submitted to the editors DATE.}}
\author{Samuel Santos-P\'erez\thanks{Department of Mathematics, University of Valencia,
Valencia, Spain (\email{samuel.santos@uv.es}).}}

\begin{document}
\maketitle

\begin{abstract}
For multivariate polynomial bases of total degree at most n, a column of the Vandermonde matrix is identified by a multi-index of exponents. In this work, we give a direct, recursive map from the scalar column index to this multi-index, based on a prescribed ordering in which exponents of the last variable are increased at the block level and the same rule is applied recursively to the remaining variables. The map is first derived explicitly in the bivariate case, where it yields closed-form column indices for the univariate Vandermonde matrices, and is then extended recursively to arbitrary dimension by means of a binomial counting function, with a proof of correctness by induction. The procedure provides random access to any total-degree column without forming the full tensor-product matrix, and is particularly advantageous for streaming construction and matrix-free evaluation. It is basis independent and applies directly to Chebyshev--Vandermonde matrices. For the full dense matrix, nested loops over the exponent multi-indices achieve the same asymptotic cost; the gain of the present approach lies in exact indexing and the avoidance of superfluous intermediate arrays.
\end{abstract}

\begin{keywords}
multivariate Vandermonde matrix, total-degree polynomial space, multi-index,
combinatorial indexing, polynomial regression
\end{keywords}

\begin{MSCcodes}
15B33, 65D05, 65F35, 41A10
\end{MSCcodes}

\section{Introduction}

Vandermonde matrices are a fundamental tool in polynomial interpolation,
least-squares fitting, and approximation~\cite{GolubVanLoan2013}. In the multivariate case, a
practical difficulty is not merely evaluating a monomial, but establishing a
consistent mapping from a scalar column index to a multi-index of exponents
within the chosen multivariate polynomial space. For the space of polynomials
of total degree at most \(n\) in \(d\) variables, there are \(\binom{n+d}{d}\)
basis functions. In contrast, the full row-wise Kronecker product (Khatri--Rao
product) of \(d\) univariate Vandermonde matrices of degree \(n\) yields
\((n+1)^d\) columns, corresponding to monomials with individual degree \(\le n\)
in each variable, reaching a total degree of up to \(dn\). Consequently, the
full product contains numerous columns that do not belong to the total-degree
\(\le n\) space and must be discarded. This work introduces a multi-index
recovery procedure that directly yields the univariate column indices
associated with any column of the multivariate matrix, avoiding the
construction of the full Kronecker product and the subsequent selection of
superfluous columns.

Several complementary numerical strategies are used in this setting. By using
Newton forms, one can avoid a direct monomial representation in multivariate
interpolation~\cite{HechtEtAl2019}; orthogonal or Arnoldi-generated bases are
used to address conditioning issues~\cite{BrubeckNakatsukasaTrefethen2021,
KuianReichelShiyanovskii2020}; and Chebyshev nodes and bases have long been
studied for their favorable Vandermonde structure~\cite{EisinbergFranzePugliese1998,
Li2008}. The row-wise Kronecker product is itself the standard tool for
constructing multivariate Vandermonde matrices in polynomial interpolation and
regression~\cite{Safak2012,Drmac2020}, and nested loops over the exponent
multi-indices are a standard approach for generating monomials in total-degree
order~\cite{deBoor2000}. Here we give a precise, recursive indexing rule for the
total-degree ordering. It can be used with the monomial basis, or applied
unchanged when a more stable polynomial basis such as Chebyshev is chosen; it
is also applicable to both interpolation and least-squares regression problems,
and can be combined with QR or SVD factorizations.

Our first contribution is a formula for the bivariate case that relates the
columns of the multivariate Vandermonde matrix, with the specific column order
defined in Section~\ref{sec:prelim}, to those of the univariate Vandermonde
matrices. Specifically, for a given row \(i\) (a data point) and global column
\(j\) (corresponding to a monomial in the total-degree order), the formula
yields the associated univariate column indices \(m(j)\) and \(\ell(j)\) for
\(x\) and \(y\), respectively. Our second contribution is a
dimension-independent recursive extension of this mapping, together with an
induction proof that clarifies the role of the degree bound. We also analyze
the computational implications: while the full-matrix construction is
dominated by the cost of writing the entries, our procedure is particularly
advantageous for random column access, streaming construction, and matrix-free
evaluation, where the monomial associated with a given column must be
identified without enumerating the preceding columns.

\section{Polynomial space, order, and Vandermonde matrices}\label{sec:prelim}
Let \(\mathbb{N}_0 := \mathbb{N} \cup \{0\}\), let \(n \in \mathbb{N}_0\), and let \(d \geq 1\). For a multi-index \(\boldsymbol{a} = (a_1,\ldots,a_d) \in \mathbb{N}_0^d\), write
\[
\boldsymbol{x}^{\boldsymbol{a}} = x_1^{a_1} \cdots x_d^{a_d}, \qquad |\boldsymbol{a}| := a_1 + \cdots + a_d.
\]
Consider the polynomial space
\[
\mathcal{M}_{n,d} = \{\boldsymbol{x}^{\boldsymbol{a}} : \boldsymbol{a} \in \mathbb{N}_0^d,\ |\boldsymbol{a}| \leq n\}.
\]
The number of monomials of total degree exactly \(k\) in \(d\) variables equals the number of non-negative integer solutions of
\[
a_1 + \cdots + a_d = k.
\]
By the stars-and-bars argument, this number is
\[
\binom{k+d-1}{d-1}.
\]
Summing over \(k=0,1,\ldots,n\) and using the
hockey-stick identity in the form
\[
\sum_{i=r}^{m}\binomc{i}{r}=\binomc{m+1}{r+1},
\]
with \(i=k+d-1\), \(r=d-1\), and \(m=n+d-1\), we obtain
\[
|\mathcal{M}_{n,d}|
=\sum_{k=0}^{n}\binomc{k+d-1}{d-1}
=\sum_{i=d-1}^{n+d-1}\binomc{i}{d-1}
=\binomc{n+d}{d}.
\]

\paragraph{Ordering of monomials.}
For two exponent tuples
\(\boldsymbol{a} = (a_1,...,a_d)\) and \(\boldsymbol{b} = (b_1,...,b_d)\)
in \(\mathbb{N}_0^d\), the \emph{lexicographic order} is defined by comparing
the tuples componentwise from left to right: we write \(\boldsymbol{a} <_{\mathrm{lex}} \boldsymbol{b}\)
if there exists an index \(k \in \{1,\ldots,d\}\) such that \(a_i = b_i\) for all
\(i < k\) and \(a_k < b_k\).

For monomials, we consider two variants.

\begin{itemize}
    \item \emph{Direct lexicographic order}: compare the exponent tuples
    \((a_1,...,a_d)\) using \(<_{\mathrm{lex}}\). The exponent of the first
    variable is the primary key, then the exponent of the second, and so on.
    \item \emph{Inverse lexicographic order}: compare the reversed exponent
    tuples \((a_d,...,a_1)\) using \(<_{\mathrm{lex}}\). The exponent of
    the last variable is the primary key, then the exponent of the previous
    variable, and so on. Consequently, all monomials with \(a_d=0\) appear
    before those with \(a_d=1\), regardless of the other exponents.
\end{itemize}

We adopt the \emph{inverse lexicographic order} throughout, restricted to
monomials of total degree at most \(n\).

\begin{example}[\(d=2\), \(n=2\)]\label{ex:order-2d}
The monomials \(x^a y^b\) with \(a+b \le 2\) ordered by the direct
lexicographic order on \((a,b)\) are
\[
1,\ y,\ y^2,\ x,\ xy,\ x^2.
\]
Ordered by the inverse lexicographic order, equivalently by the direct
lexicographic order on the reversed tuple \((b,a)\), they are
\[
1,\ x,\ x^2,\ y,\ xy,\ y^2.
\]
\end{example}
\begin{example}[\(d=3\), \(n=2\)]\label{ex:order-3d}
The monomials \(x^a y^b z^c\) with \(a+b+c \le 2\) ordered by the direct
lexicographic order on \((a,b,c)\) are
\[
1,\ z,\ z^2,\ y,\ yz,\ y^2,\ x,\ xz,\ xy,\ x^2.
\]
Ordered by the inverse lexicographic order they are
\[
1,\ x,\ x^2,\ y,\ xy,\ y^2,\ z,\ xz,\ yz,\ z^2.
\]
\end{example}
\begin{example}[\(d=3\), \(n=3\)]
The monomials \(x^a y^b z^c\) with \(a+b+c \le 3\) ordered by the direct
lexicographic are
\[
1,\ z,\ z^2,\ z^3,\ y,\ yz,\ yz^2,\ y^2,\ y^2z,\ y^3,\
x,\ xz,\ xz^2,\ xy,\ xyz,\ xy^2,\ x^2,\ x^2z,\ x^2y,\ x^3.
\]
Ordered by the inverse lexicographic order they are
\[
1,\ x,\ x^2,\ x^3,\ y,\ xy,\ x^2y,\ y^2,\ xy^2,\ y^3,\ z,\ xz,\ x^2z,\ yz,\ xyz,\ y^2z,\ z^2,\ xz^2,\ yz^2,\ z^3.
\]
\end{example}
\begin{example}[\(d=4\), \(n=2\)]
The monomials \(x^a y^b z^c w^d\) with \(a+b+c+d \le 3\) ordered by the inverse lexicographic order are
\[
1,\ x,\ x^2,\ y,\ xy,\ y^2,\ z,\ xz,\ yz,\ z^2,w,\ xw,\ yw,\ zw,\ w^2.
\]
\end{example}
In particular, suppose the exponents of the later variables
\(x_{i+1},\ldots,x_d\) have already been fixed, and let
\[
N = n - (a_{i+1}+\cdots+a_d)
\]
be the remaining degree available for the first \(i\) variables. The monomials
in these first \(i\) variables of total degree at most \(N\) are ordered by
the same rule as before, with \(x_i\) playing the role of the last variable.
Thus the blocks are ordered primarily by \(a_i\): all columns with \(a_i=0\)
precede those with \(a_i=1\), those with \(a_i=1\) precede those with \(a_i=2\),
and so on up to \(a_i=N\). Within each value of \(a_i\), the exponents of
\(x_1,\ldots,x_{i-1}\) vary according to inverse lexicographic ordering.
Consequently, the maximum permissible value \(a_i=N\) appears last
in this block. Here, a \emph{level-\(i\) block with exponents \((a_{i+1},\ldots,a_d)\)}
is the set of columns of the Vandermonde matrix that share those exponents for
the later variables \(x_{i+1},\ldots,x_d\). Within such a block, the exponents
of \(x_1,\ldots,x_i\) vary according to the ordering. This convention,
together with the fact that all matrix indices start at one, is essential for
the strict inequalities used later.

\begin{example}[Level-\(2\) block for \(d=3\), \(n=3\)]
\label{ex:level-block}
Consider the case \(d=3\), \(n=3\), with variables \(x,y,z\) and exponents
\((a,b,c)\). Fix the exponent of the last variable \(z\) to be \(c=1\). The
corresponding level-\(2\) block with exponent \(c=1\) is the set of columns
whose monomials have \(z^1\) as a factor. These are
\[
z,\ xz,\ x^2z,\ yz,\ xyz,\ y^2z.
\]
They correspond to the monomials in \(x,y\) of total degree at most
\(N=n-c=2\), ordered by the inverse lexicographic order on \((a,b)\), and
then multiplied by \(z\). Within this block, the exponent \(b\) of \(y\)
acts as the primary key: the sub-block with \(b=0\) consists of
\(z, xz, x^2z\); the sub-block with \(b=1\) consists of \(yz, xyz\); and the
sub-block with \(b=2\) consists of \(y^2z\).
\end{example}

For data points \(\boldsymbol{x}_i=(x_{i1},\ldots,x_{id})\), \(1\leq i\leq r\),
let \(\Vd\in\mathbb{R}^{r\times\binom{n+d}{d}}\) be the multivariate Vandermonde matrix in the order defined above. Let \(j\in\{1,\ldots,\binom{n+d}{d}\}\) be a global column index, and let
\[
\boldsymbol{a}(j) = (a_1(j),\ldots,a_d(j)) \in \mathbb{N}_0^d
\]
be the multi-index of exponents associated with column \(j\), so that column \(j\) represents the monomial \(\boldsymbol{x}^{\boldsymbol{a}(j)} = x_1^{a_1(j)}\cdots x_d^{a_d(j)}\). Then
\begin{equation}\label{eq:vd-entry}
 (\Vd)_{ij}=\prod_{k=1}^d x_{ik}^{a_k(j)}.
\end{equation}
The univariate Vandermonde matrices \(V_x\in\mathbb{R}^{r\times(n+1)}\) are defined by
\begin{equation}\label{eq:univ}
 (V_x)_{ij}=x_{i}^{j-1},\qquad 1\leq j\leq n+1.
\end{equation}

\section{The block-counting function}

For \(N\geq0\), \(s\geq1\), and \(0\leq q\leq N+1\), define
\begin{equation}\label{eq:f}
 f(N,s,q):=\binomc{N+s}{s}-\binomc{N+s-q}{s}.
\end{equation}
In the ordering introduced above, monomials are grouped by the exponent of
the last variable. Therefore, \(f(N,s,q)\) counts the monomials in \(s\)
variables of total degree at most \(N\) whose exponent in the last variable is
strictly smaller than \(q\), i.e., the number of columns that precede the
block with last exponent \(q\).

To derive \eqref{eq:f}, group the monomials by the value \(t\) of the exponent
of the last variable, for \(t=0,1,\ldots,q-1\). For a fixed \(t\), the
remaining \(s-1\) variables must satisfy a total degree bound of \(N-t\), so
the number of such monomials is
\[
 \binomc{N-t+s-1}{s-1}.
\]
Summing over \(t\) gives
\[
 \sum_{t=0}^{q-1}\binomc{N-t+s-1}{s-1}
 =\binomc{N+s}{s}-\binomc{N+s-q}{s}.
\]

\begin{proposition}\label{prop:block}
For \(1\leq j\leq\binomc{N+s}{s}\), there is a unique
\(q\in\{0,\ldots,N\}\) satisfying
\begin{equation}\label{eq:block}
 f(N,s,q)<j\leq f(N,s,q+1).
\end{equation}
\end{proposition}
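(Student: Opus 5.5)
The plan is to regard \(q\mapsto f(N,s,q)\) as a strictly increasing integer sequence on \(\{0,1,\ldots,N+1\}\) with \(f(N,s,0)=0\) and \(f(N,s,N+1)=\binomc{N+s}{s}\). The half-open intervals \((f(N,s,q),f(N,s,q+1)]\), \(q=0,\ldots,N\), then partition \(\{1,\ldots,\binomc{N+s}{s}\}\). Existence and uniqueness of \(q\) follow directly.

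First I check the endpoint values from \eqref{eq:f}. For \(q=0\) the two binomials coincide, so \(f(N,s,0)=0\). For \(q=N+1\) the subtracted term is \(\binomc{s-1}{s}=0\), since \(s\ge1\) and \(0\le s-1<s\). Hence \(f(N,s,N+1)=\binomc{N+s}{s}\).

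Next I establish strict monotonicity using the summation formula already derived:
\[
f(N,s,q+1)-f(N,s,q)=\binomc{N-q+s-1}{s-1}.
\]
For \(0\le q\le N\) we have \(N-q+s-1\ge s-1\ge 0\), so this binomial coefficient is at least \(1\). (If preferred, this also follows from Pascal's rule applied to \(\binomc{N+s-q}{s}-\binomc{N+s-q-1}{s}\).) Therefore \(0=f(N,s,0)<f(N,s,1)<\cdots<f(N,s,N+1)=\binomc{N+s}{s}\).

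Finally, given \(j\) with \(1\le j\le\binomc{N+s}{s}\), I take \(q\) to be the largest index in \(\{0,\ldots,N\}\) with \(f(N,s,q)<j\). Such an index exists because \(f(N,s,0)=0<j\). If \(q<N\), maximality gives \(f(N,s,q+1)\ge j\). If \(q=N\), then \(j\le\binomc{N+s}{s}=f(N,s,N+1)\). In both cases \eqref{eq:block} holds. For uniqueness, suppose \(q<q'\) both satisfy \eqref{eq:block}. Monotonicity gives \(j\le f(N,s,q+1)\le f(N,s,q')<j\), a contradiction.

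The only step that needs any care is the edge case \(q=N+1\), where one must use the convention \(\binomc{m}{s}=0\) for \(0\le m<s\) and verify that the difference formula stays valid at the last step, \(q=N\). I expect no real obstacle beyond that bookkeeping.
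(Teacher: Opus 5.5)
Your proof is correct and follows the same route as the paper. Both show that \(f(N,s,\cdot)\) is strictly increasing because the successive difference is \(\binom{N-q+s-1}{s-1}>0\), check the endpoints \(f(N,s,0)=0\) and \(f(N,s,N+1)=\binom{N+s}{s}\), and conclude that the half-open intervals partition \(\{1,\ldots,\binom{N+s}{s}\}\); you additionally spell out the convention \(\binom{s-1}{s}=0\) and give explicit existence (largest \(q\) with \(f(N,s,q)<j\)) and uniqueness arguments, which the paper leaves implicit.
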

\begin{proof}
For \(q=0,\ldots,N\), the successive difference is
\[
 f(N,s,q+1)-f(N,s,q)=\binomc{N-q+s-1}{s-1}>0.
\]
Thus \(f(N,s,\cdot)\) is strictly increasing. Since
\(f(N,s,0)=0\) and \(f(N,s,N+1)=\binomc{N+s}{s}\), the intervals
\((f(N,s,q),f(N,s,q+1)]\) for \(q=0,\ldots,N\) are disjoint and their union
is \((0,\binomc{N+s}{s}]\). Hence every integer
\(j\in\{1,\ldots,\binomc{N+s}{s}\}\) lies in exactly one of these intervals,
which proves existence and uniqueness of \(q\).
\end{proof}

\begin{corollary}\label{cor:local}
Let \(j\) be an integer with \(1\leq j\leq\binomc{N+s}{s}\), and let \(q\) be
the unique integer from Proposition~\ref{prop:block}. Define
\[
 j'=j-f(N,s,q).
\]
Then
\[
 1\leq j'\leq\binomc{N-q+s-1}{s-1}.
\]
\end{corollary}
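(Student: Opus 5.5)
The corollary follows by subtracting \(f(N,s,q)\) from every part of the double inequality \eqref{eq:block} in Proposition~\ref{prop:block}. Since \(q\) is the unique integer with \(f(N,s,q)<j\leq f(N,s,q+1)\), setting \(j'=j-f(N,s,q)\) gives
\[
0<j'\leq f(N,s,q+1)-f(N,s,q).
\]
The left inequality becomes \(j'\geq1\) because \(j\) and \(f(N,s,q)\) are integers (the latter a difference of binomial coefficients), so \(j'\) is an integer and strict positivity means \(j'\geq 1\).

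For the upper bound, the successive difference of \(f\) has already been computed in the proof of Proposition~\ref{prop:block}. For \(0\leq q\leq N\),
\[
f(N,s,q+1)-f(N,s,q)=\binomc{N+s-q}{s}-\binomc{N+s-q-1}{s}=\binomc{N-q+s-1}{s-1}.
\]
This is Pascal's rule \(\binomc{m}{s}=\binomc{m-1}{s}+\binomc{m-1}{s-1}\) with \(m=N+s-q\geq s\geq 1\). Substituting it into the displayed inequality yields \(j'\leq\binomc{N-q+s-1}{s-1}\), which is the claim.

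The only points needing care are that \(q\leq N\), so that \(q+1\leq N+1\) and \(f(N,s,q+1)\) lies in the domain of definition \eqref{eq:f}, and that Pascal's rule applies with nonnegative upper arguments. Both are guaranteed by Proposition~\ref{prop:block} and by \(s\geq1\). There is no genuine obstacle. Combinatorially, the statement says that \(j'\) is a valid local index inside the block of monomials whose last exponent is \(q\). That block consists of the monomials in the remaining \(s-1\) variables of degree at most \(N-q\), and it has size \(\binomc{N-q+s-1}{s-1}\). This interpretation is what the recursive step uses.
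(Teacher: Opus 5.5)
Your proof is correct and follows the paper's argument: you subtract \(f(N,s,q)\) across \eqref{eq:block}, use integrality of \(j'\) for the lower bound, and invoke the successive-difference formula \(f(N,s,q+1)-f(N,s,q)=\binomc{N-q+s-1}{s-1}\) from the proof of Proposition~\ref{prop:block} for the upper bound. Your explicit Pascal's-rule derivation and your check that \(q\leq N\) keeps \(f(N,s,q+1)\) within the domain of \eqref{eq:f} are small details that the paper leaves implicit.
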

\begin{proof}
From \eqref{eq:block}, we have
\(0<j-f(N,s,q)\leq f(N,s,q+1)-f(N,s,q)\). Using the difference formula from
the proof of Proposition~\ref{prop:block}, this becomes
\[
 0<j'\leq\binomc{N-q+s-1}{s-1}.
\]
Since \(j'\) is an integer, the lower bound is \(1\).
\end{proof}

\begin{remark}\label{rem:local-index}
The integer \(j'\) is the position of \(j\) within the block of monomials
whose exponent in the last variable equals \(q\). After fixing this exponent,
the remaining monomials in \(s-1\) variables of total degree at most \(N-q\)
are listed in the same order. There are \(\binomc{N-q+s-1}{s-1}\) such
monomials. The first one corresponds to \(j'=1\), the second to \(j'=2\), and
so on. This is the index used in the recursive step of the
algorithm.
\end{remark}

\section{The bivariate formula}

For \(d=2\), the order is
\[
 1,x,\ldots,x^n;\quad y,xy,\ldots,x^{n-1}y;\quad\ldots;\quad y^n.
\]
Thus \(f(n,2,q)=q(n+1)-q(q-1)/2\). We now state the bivariate indexing
theorem in the notation introduced above. Let \(V^{(2)}\) be the multivariate
Vandermonde matrix whose columns are ordered as above, so that its \(j\)-th
column corresponds to the \(j\)-th monomial in the order established throughout the manuscript.

\begin{theorem}\label{thm:two}
Let \(1\leq j\leq\binomc{n+2}{2}\), and let \(q(j)\) be the unique integer in
\(\{0,\ldots,n\}\) such that
\begin{equation}\label{eq:qdef}
 f(n,2,q(j))<j\leq f(n,2,q(j)+1).
\end{equation}
Define
\begin{equation}\label{eq:ml}
 \ell(j)=q(j)+1,\qquad m(j)=j-f(n,2,q(j)).
\end{equation}
Then \(1\leq m(j),\ell(j)\leq n+1\), the \(j\)-th monomial in the
total-degree order defined above is \(x^{m(j)-1}y^{\ell(j)-1}\), and
\begin{equation}\label{eq:bivfact}
 (V^{(2)})_{i,j}=(V_x)_{i,m(j)}(V_y)_{i,\ell(j)}.
\end{equation}
\end{theorem}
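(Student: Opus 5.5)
The argument specializes Proposition~\ref{prop:block} and Corollary~\ref{cor:local} to \(N=n\), \(s=2\), and then identifies the position inside a block with the exponent of \(x\). Existence and uniqueness of \(q(j)\) in \eqref{eq:qdef} is exactly Proposition~\ref{prop:block} with \(N=n\), \(s=2\), so \(\ell(j)\) and \(m(j)\) are well defined. Since \(q(j)\in\{0,\ldots,n\}\), we get \(1\le \ell(j)\le n+1\) directly. For \(m(j)\), Corollary~\ref{cor:local} gives
\[
1\le m(j)\le \binomc{n-q(j)+1}{1}=n-q(j)+1 .
\]
In particular \(m(j)\le n+1\), and also \((m(j)-1)+(\ell(j)-1)\le n\). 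So the resulting monomial does lie in \(\mathcal{M}_{n,2}\).

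The main step is identifying the \(j\)-th monomial, and here I would argue by counting, using the inverse lexicographic order. In that order the monomials \(x^ay^b\) with \(a+b\le n\) are grouped by \(b\), with blocks \(b=0,1,\ldots,n\) in increasing order. By the derivation of \eqref{eq:f}, the number of monomials with \(b<q\) equals \(f(n,2,q)\). Hence the block with \(b=q\) occupies exactly the global positions \(f(n,2,q)+1,\ldots,f(n,2,q+1)\). Therefore \eqref{eq:qdef} says precisely that the \(j\)-th monomial has \(y\)-exponent \(q(j)=\ell(j)-1\).

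Within that block, the secondary key is \(a\), increasing from \(0\) to \(n-q(j)\). This is the single-variable case of the recursion described in Remark~\ref{rem:local-index}, where the only remaining variable \(x\) is ordered by its exponent. So the element at local position \(j'=m(j)\) has \(a=m(j)-1\), and the \(j\)-th monomial is \(x^{m(j)-1}y^{\ell(j)-1}\). As a sanity check, the closed form \(f(n,2,q)=q(n+1)-q(q-1)/2\) follows from the telescoping sum \(\sum_{t<q}(n-t+1)\).

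Finally, \eqref{eq:bivfact} follows by combining \eqref{eq:vd-entry} with \(a_1(j)=m(j)-1\) and \(a_2(j)=\ell(j)-1\), together with \eqref{eq:univ}:
\[
(V^{(2)})_{ij}=x_{i1}^{m(j)-1}x_{i2}^{\ell(j)-1}=(V_x)_{i,m(j)}(V_y)_{i,\ell(j)} .
\]
This is valid because both column indices lie in \(\{1,\ldots,n+1\}\). The only delicate point is the off-by-one bookkeeping: the strict inequality on the left of \eqref{eq:qdef} together with one-based indexing makes \(m(j)\ge1\), and this is exactly the content of Corollary~\ref{cor:local}.
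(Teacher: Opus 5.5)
Your proposal is correct and follows essentially the same route as the paper. Both arguments use Proposition~\ref{prop:block} for \(q(j)\), the counting meaning of \(f(n,2,q)\) to locate the \(y\)-block, the ascending \(x\)-exponent inside the block to read off \(m(j)-1\), and the definitions \eqref{eq:vd-entry}--\eqref{eq:univ} for the factorization; the only differences are cosmetic, namely that you cite Corollary~\ref{cor:local} for \(m(j)\le n-q(j)+1\) rather than recomputing the difference, and you note explicitly that the monomial lies in \(\mathcal{M}_{n,2}\).
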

\begin{proof}
We prove the claims one by one.

First, recall from Proposition~\ref{prop:block} that for \(N=n\) and \(s=2\)
the function \(f(n,2,\cdot)\) is strictly increasing and satisfies
\[
f(n,2,0)=0,\qquad
f(n,2,n+1)=\binomc{n+2}{2}.
\]
Hence there is a unique integer \(q(j)\in\{0,\ldots,n\}\) such that
\[
f(n,2,q(j))<j\leq f(n,2,q(j)+1).
\]

By definition, \(f(n,2,q)\) counts the monomials \(x^a y^b\) with
\(a+b\leq n\) and \(b<q\). Therefore the columns whose monomials have
\(y\)-exponent exactly \(q\) occupy the global indices \(j\):
\[
f(n,2,q)+1,\quad f(n,2,q)+2,\quad\ldots,\quad f(n,2,q+1).
\]
Since \(j\) lies in this interval, the \(j\)-th monomial has \(y\)-exponent
equal to \(q(j)\).

Next, the local position of \(j\) inside this block is
\[
j-f(n,2,q(j)).
\]
Because indices start at \(1\), the first column of the block corresponds to
\(x\)-exponent \(0\), the second to \(x\)-exponent \(1\), and so on. Thus
\[
m(j):=j-f(n,2,q(j))
\]
is precisely the column index in \(V_x\) associated with the \(x\)-part of the
monomial. Consequently, the \(x\)-exponent is \(m(j)-1\).

For the \(y\)-part, the exponent is \(q(j)\), so the corresponding column
index in \(V_y\) is one plus that exponent:
\[
\ell(j):=q(j)+1.
\]
Hence the \(y\)-exponent is \(\ell(j)-1\).

The bounds follow immediately: since \(q(j)\in\{0,\ldots,n\}\), we have
\[
1\leq \ell(j)=q(j)+1\leq n+1.
\]
Moreover, because \(j\) belongs to the block of \(y\)-exponent \(q(j)\),
\[
1\leq j-f(n,2,q(j))\leq f(n,2,q(j)+1)-f(n,2,q(j))=n-q(j)+1\leq n+1,
\]
so \(1\leq m(j)\leq n+1\). Therefore the \(j\)-th monomial is
\[
x^{m(j)-1}y^{\ell(j)-1}.
\]

Finally, for a data point \((x_i,y_i)\), the value of this monomial is
\[
x_i^{m(j)-1}y_i^{\ell(j)-1}.
\]
By the definition of the univariate Vandermonde matrices,
\[
(V_x)_{i,m(j)}=x_i^{m(j)-1},\qquad
(V_y)_{i,\ell(j)}=y_i^{\ell(j)-1}.
\]
Multiplying these two entries gives
\[
(V^{(2)})_{i,j}
=
x_i^{m(j)-1}y_i^{\ell(j)-1}
=
(V_x)_{i,m(j)}\,(V_y)_{i,\ell(j)}.
\]
This proves \eqref{eq:bivfact}.
\end{proof}

\begin{example}\label{ex:two}
For \(n=3\), the ten columns and their univariate indices are displayed below.
\begin{center}
\small
\setlength{\tabcolsep}{2pt}
\newcolumntype{C}{>{\centering\arraybackslash}p{0.6cm}}
\begin{tabular}{c *{10}{C}}
\toprule
\(j\)&1&2&3&4&5&6&7&8&9&10\\
\midrule
monomial&\(1\)&\(x\)&\(x^2\)&\(x^3\)&\(y\)&\(xy\)&\(x^2y\)&\(y^2\)&\(xy^2\)&\(y^3\)\\
\(m(j)\)&1&2&3&4&1&2&3&1&2&1\\
\(\ell(j)\)&1&1&1&1&2&2&2&3&3&4\\
\bottomrule
\end{tabular}
\end{center}
\end{example}

\section{Arbitrary dimension}

Define \(\Phi_s(N,j)=(a_1,\ldots,a_s)\) recursively.  For \(s=1\), set
\begin{equation}\label{eq:phi1}
 \Phi_1(N,j)=(j-1),\qquad 1\leq j\leq N+1.
\end{equation}
For \(s>1\), let \(a_s\) be the unique integer satisfying
\begin{equation}\label{eq:as}
 f(N,s,a_s)<j\leq f(N,s,a_s+1),
\end{equation}
put \(N'=N-a_s\) and \(j'=j-f(N,s,a_s)\), and define
\begin{equation}\label{eq:phi-rec}
 \Phi_s(N,j)=\bigl(\Phi_{s-1}(N',j'),a_s\bigr).
\end{equation}

\begin{theorem}\label{thm:general}
For every \(s\geq1\), \(N\geq0\), and
\(1\leq j\leq\binomc{N+s}{s}\), \(\Phi_s(N,j)\) is the exponent vector of the
\(j\)-th monomial of \(\mcM_{N,s}\) in the prescribed order.  If
\(\Phi_d(n,j)=(a_1(j),\ldots,a_d(j))\) and
\(m_k(j)=a_k(j)+1\), then
\begin{equation}\label{eq:general-factor}
 (\Vd)_{i,j}=\prod_{k=1}^d (V_{x_k})_{i,m_k(j)}.
\end{equation}
\end{theorem}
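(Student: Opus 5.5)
We argue by induction on \(s\), proving the stronger statement that for every \(N\ge0\) the map \(j\mapsto\Phi_s(N,j)\) lists the elements of \(\mcM_{N,s}\) in the inverse lexicographic order. That is, \(\Phi_s(N,j)\) is the exponent vector of the \(j\)-th monomial for every \(1\le j\le\binom{N+s}{s}\). The quantifier over all \(N\) is essential: the recursive step calls \(\Phi_{s-1}\) with the reduced degree \(N'=N-a_s\), not with \(N\).

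For the base case \(s=1\), \(\mcM_{N,1}=\{x_1^0,\ldots,x_1^N\}\), listed in increasing exponent. The \(j\)-th element is \(x_1^{j-1}\), which agrees with \eqref{eq:phi1}.

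For the inductive step with \(s>1\), fix \(N\) and \(j\). By Proposition~\ref{prop:block} with this \((N,s)\), the integer \(a_s\in\{0,\ldots,N\}\) in \eqref{eq:as} exists and is unique, so \(\Phi_s\) is well defined. In the inverse lexicographic order the last exponent is the primary key, and \(f(N,s,q)\) counts the monomials with last exponent \(<q\). Hence the positions \(f(N,s,q)+1,\ldots,f(N,s,q+1)\) are exactly the monomials with last exponent \(q\). It follows that the \(j\)-th monomial has last exponent \(a_s\), and \(j'=j-f(N,s,a_s)\) is its position inside that block. By Corollary~\ref{cor:local}, \(1\le j'\le\binom{N'+s-1}{s-1}\), so \(j'\) lies in the domain of \(\Phi_{s-1}(N',\cdot)\).

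The main point to verify carefully is that the block is order-isomorphic to \(\mcM_{N',s-1}\) in its own inverse lexicographic order, via \((a_1,\ldots,a_{s-1},a_s)\mapsto(a_1,\ldots,a_{s-1})\). This map is a bijection onto the block because the constraint \(|\boldsymbol a|\le N\) with \(a_s\) fixed is equivalent to \(a_1+\cdots+a_{s-1}\le N-a_s=N'\). It preserves order because two tuples sharing the last entry compare under reversed lexicographic order exactly as their truncations do. These observations justify Remark~\ref{rem:local-index}. The induction hypothesis at degree \(N'\) then shows that the \(j'\)-th element of the block has first \(s-1\) exponents \(\Phi_{s-1}(N',j')\). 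Appending \(a_s\) gives \eqref{eq:phi-rec}.

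Finally, \eqref{eq:general-factor} follows as in Theorem~\ref{thm:two}. Since \(|\boldsymbol a(j)|\le n\), we have \(0\le a_k(j)\le n\), so \(1\le m_k(j)\le n+1\) is a valid column index of \(V_{x_k}\). Then \((V_{x_k})_{i,m_k(j)}=x_{ik}^{a_k(j)}\) by \eqref{eq:univ}, and taking the product over \(k\) gives \eqref{eq:vd-entry}.
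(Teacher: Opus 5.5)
Your proposal is correct and follows essentially the same route as the paper: induction on \(s\) uniformly in the degree bound, locating the last-exponent block via Proposition~\ref{prop:block} and Corollary~\ref{cor:local}, recursing on the local index \(j'\) at degree \(N'=N-a_s\), and reading off the factorization from \eqref{eq:univ}. The differences are only refinements: you start the induction at \(s=1\) alone (the paper also uses Theorem~\ref{thm:two} as a base case, which is not needed), and you explicitly verify the order-isomorphism of the \(a_s\)-block with \(\mcM_{N',s-1}\) and the range \(1\le m_k(j)\le n+1\), both of which the paper treats as immediate.
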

\begin{proof}
We prove the statement by induction on \(s\), universally in the degree bound
\(N\). That is, for each \(s\geq 1\) we prove that \(\Phi_s(N,j)\) is correct
for every \(N\geq 0\) and every valid index \(j\).

The cases \(s=1\) and \(s=2\) are \eqref{eq:phi1} and
Theorem~\ref{thm:two}, respectively. We therefore assume \(s\geq 3\) and that
the statement holds for \(s-1\) variables for every degree bound \(N\geq 0\).

Let \(N\geq 0\) and \(1\leq j\leq\binomc{N+s}{s}\). By
Proposition~\ref{prop:block}, there exist a unique
\(a_s\in\{0,\ldots,N\}\), a reduced degree \(N'=N-a_s\), and a local index
\(j'=j-f(N,s,a_s)\) such that
\[
1\leq j'\leq\binomc{N'+s-1}{s-1}.
\]
Thus \(j'\) is a valid index for the problem of \(s-1\) variables with total
degree at most \(N'\). By the induction hypothesis,
\[
\Phi_{s-1}(N',j')=(a_1,\ldots,a_{s-1})
\]
gives the correctly ordered exponent vector for that reduced problem.

It remains to show that \(\Phi_s(N,j)=(a_1,\ldots,a_{s-1},a_s)\) is the exponent
vector of the \(j\)-th monomial in \(\mathcal{M}_{N,s}\). By
Proposition~\ref{prop:block}, the columns whose last exponent equals \(a_s\)
occupy the global indices
\[
f(N,s,a_s)+1,\quad f(N,s,a_s)+2,\quad\ldots,\quad f(N,s,a_s+1).
\]
Since \(j\) lies in this interval and \(j'=j-f(N,s,a_s)\), the integer \(j'\)
is exactly the local position of \(j\) inside the \(a_s\)-block. The ordering
inside this block is, by definition, the same as the ordering of monomials in
\(s-1\) variables with total degree at most \(N'=N-a_s\). Therefore, by the
induction hypothesis, the \(j'\)-th monomial of that reduced problem is
\[
x_1^{a_1}\cdots x_{s-1}^{a_{s-1}}.
\]
Appending \(a_s\) gives the monomial
\[
x_1^{a_1}\cdots x_{s-1}^{a_{s-1}}x_s^{a_s}.
\]
Its local position within the \(a_s\)-block is \(j'\), and the block itself
starts at global offset \(f(N,s,a_s)\). Hence its global position is
\[
f(N,s,a_s)+j'=f(N,s,a_s)+\bigl(j-f(N,s,a_s)\bigr)=j.
\]
Thus \(\Phi_s(N,j)=(a_1,\ldots,a_s)\) is precisely the exponent vector of the
\(j\)-th monomial in \(\mathcal{M}_{N,s}\).

Finally, for any data point \(\boldsymbol{x}_i\), the value of this monomial is
\[
\prod_{k=1}^s x_{ik}^{a_k}.
\]
By the definition of the univariate Vandermonde matrices,
\[
(V_{x_k})_{i,m_k}=x_{ik}^{m_k-1}=x_{ik}^{a_k},
\]
where \(m_k=a_k+1\). Therefore
\[
(\Vd)_{i,j}=\prod_{k=1}^s (V_{x_k})_{i,m_k(j)},
\]
which proves \eqref{eq:general-factor}.
\end{proof}

The universal quantifier in \(N\) is necessary: after the last exponent is
fixed, the induction call has degree \(N-a_s\), not generally \(N\).  The
bivariate result in Theorem~\ref{thm:two} is the first nontrivial instance of
this construction; its closed form gives a convenient base formula in two
dimensions.

\begin{algorithm}[t]
\caption{Direct recovery of the total-degree multi-index.}\label{alg:index}
\begin{algorithmic}[1]
\REQUIRE \(d,N,j\), where \(1\leq j\leq\binomc{N+d}{d}\)
\FOR{\(k=d,d-1,\ldots,1\)}
  \STATE Find the unique \(a_k\in\{0,\ldots,N\}\) such that
  \(f(N,k,a_k)<j\leq f(N,k,a_k+1)\).
  \STATE \(j\leftarrow j-f(N,k,a_k)\), \(N\leftarrow N-a_k\).
\ENDFOR
\RETURN \((a_1,\ldots,a_d)\), or \((a_1+1,\ldots,a_d+1)\) as univariate column indices.
\end{algorithmic}
\end{algorithm}

\begin{example}[\(d=2\), \(n=2\), \(j=4\)]
The total number of monomials is \(\binom{4}{2}=6\). The monomials in the
prescribed order are listed in Example~\ref{ex:order-2d}. We compute
\(\Phi_2(2,4)\) by applying Proposition~\ref{prop:block} with \(N=2\), \(s=2\),
and \(j=4\).

\begin{itemize}
    \item \(s=2\): \(N=2\), \(j=4\). Compute
    \[
    f(2,2,q)=\binom{4}{2}-\binom{4-q}{2}.
    \]
    Then \(f(2,2,0)=0\), \(f(2,2,1)=3\), and \(f(2,2,2)=5\). Since
    \(f(2,2,1)=3<4\le f(2,2,2)=5\), we get \(a_2=1\).
    Then \(j\leftarrow 4-3=1\), \(N\leftarrow 2-1=1\).
    \item \(s=1\): \(N=1\), \(j=1\). By the base case
    \(\Phi_1(N,j)=(j-1)\), we get \(a_1=0\).
\end{itemize}

Hence \(\Phi_2(2,4)=(a_1,a_2)=(0,1)\), corresponding to
\(x_1^0x_2^1=x_2\), which is the fourth monomial in the list of
Example~\ref{ex:order-2d}. The univariate column indices are
\((m_1,m_2)=(a_1+1,a_2+1)=(1,2)\).
\end{example}

\begin{example}[\(d=3\), \(n=2\), \(j=8\)]
The total number of monomials is \(\binom{5}{3}=10\). The monomials in the
prescribed order are listed in Example~\ref{ex:order-3d}. We compute
\(\Phi_3(2,8)\) by applying Proposition~\ref{prop:block} with \(N=2\), \(s=3\),
and \(j=8\).

\begin{itemize}
    \item \(s=3\): \(N=2\), \(j=8\). Compute
    \[
    f(2,3,q)=\binom{5}{3}-\binom{5-q}{3}.
    \]
    Then \(f(2,3,0)=0\), \(f(2,3,1)=6\), and \(f(2,3,2)=9\). Since
    \(f(2,3,1)=6<8\le f(2,3,2)=9\), we get \(a_3=1\).
    Then \(j\leftarrow 8-6=2\), \(N\leftarrow 2-1=1\).
    \item \(s=2\): \(N=1\), \(j=2\). Compute
    \[
    f(1,2,q)=\binom{3}{2}-\binom{3-q}{2}.
    \]
    Then \(f(1,2,0)=0\) and \(f(1,2,1)=2\). Since
    \(f(1,2,0)=0<2\le f(1,2,1)=2\), we get \(a_2=0\).
    Then \(j\leftarrow 2-0=2\), \(N\leftarrow 1-0=1\).
    \item \(s=1\): \(N=1\), \(j=2\). By the base case
    \(\Phi_1(N,j)=(j-1)\), we get \(a_1=1\).
\end{itemize}

Hence \(\Phi_3(2,8)=(a_1,a_2,a_3)=(1,0,1)\), corresponding to
\(x_1^1x_2^0x_3^1=x_1x_3\), which is the eighth monomial in the list of
Example~\ref{ex:order-3d}. The univariate column indices are
\((m_1,m_2,m_3)=(a_1+1,a_2+1,a_3+1)=(2,1,2)\).
\end{example}

\begin{example}[\(d=4\), \(n=3\), \(j=25\)]
The total number of monomials is \(\binom{3+4}{4}=35\). Ordered by the inverse
lexicographic order on \((a_4,a_3,a_2,a_1)\), they are:
\[
\begin{aligned}
&1,\ x_1,\ x_1^2,\ x_1^3,\ x_2,\ x_1x_2,\ x_1^2x_2,\ x_2^2,\ x_1x_2^2,\ x_2^3,\\
&x_3,\ x_1x_3,\ x_1^2x_3,\ x_2x_3,\ x_1x_2x_3,\ x_2^2x_3,\ x_3^2,\ x_1x_3^2,\ x_2x_3^2,\ x_3^3,\\
&x_4,\ x_1x_4,\ x_1^2x_4,\ x_2x_4,\ x_1x_2x_4,\ x_2^2x_4,\ x_3x_4,\ x_1x_3x_4,\ x_2x_3x_4,\ x_3^2x_4,\\
&x_4^2,\ x_1x_4^2,\ x_2x_4^2,\ x_3x_4^2,\ x_4^3.
\end{aligned}
\]
The monomial in position \(j=25\) is \(x_1x_2x_4\). We recover its
multi-index with the recursive procedure.

\begin{itemize}
    \item \(s=4\): \(N=3\), \(j=25\). Compute
    \[
    f(3,4,q)=35-\binom{7-q}{4}.
    \]
    Then \(f(3,4,1)=20\) and \(f(3,4,2)=30\). Since
    \(f(3,4,1)=20<25\le f(3,4,2)=30\), we get \(a_4=1\).
    Then \(j\leftarrow 25-20=5\), \(N\leftarrow 3-1=2\).
    \item \(s=3\): \(N=2\), \(j=5\). Compute
    \[
    f(2,3,q)=10-\binom{5-q}{3}.
    \]
    Then \(f(2,3,0)=0\) and \(f(2,3,1)=6\). Since
    \(f(2,3,0)=0<5\le f(2,3,1)=6\), we get \(a_3=0\).
    Then \(j\leftarrow 5-0=5\), \(N\leftarrow 2-0=2\).
    \item \(s=2\): \(N=2\), \(j=5\). Compute
    \[
    f(2,2,q)=6-\binom{4-q}{2}.
    \]
    Then \(f(2,2,1)=3\) and \(f(2,2,2)=5\). Since
    \(f(2,2,1)=3<5\le f(2,2,2)=5\), we get \(a_2=1\).
    Then \(j\leftarrow 5-3=2\), \(N\leftarrow 2-1=1\).
    \item \(s=1\): \(N=1\), \(j=2\). By the base case
    \(\Phi_1(N,j)=(j-1)\), we get \(a_1=1\).
\end{itemize}

Hence \(\Phi_4(3,25)=(a_1,a_2,a_3,a_4)=(1,1,0,1)\), corresponding to
\(x_1^1x_2^1x_3^0x_4^1=x_1x_2x_4\), as anticipated. The univariate column
indices are \((m_1,m_2,m_3,m_4)=(a_1+1,a_2+1,a_3+1,a_4+1)=(2,2,1,2)\).
\end{example}

\section{Cost and implementation consequences}

At each level \(k\), the value \(a_k\) is the integer solution of
\(\binom{M}{k}<K\le\binom{M+1}{k}\) with \(M=N+k-a_k-1\) and
\(K=\binom{N+k}{k}-j+1\). For \(k=1\) this inversion is trivial, and for
\(k=2\) it reduces to a quadratic inequality, which yields the closed form
used in the bivariate case. For \(k\ge 3\), however, inverting the binomial
coefficient amounts to solving a polynomial equation of degree \(k\). No
general closed-form solution by radicals exists for \(k\ge 5\), so binary
search on the strictly increasing function \(f(N,k,\cdot)\) is the practical
general-purpose method. This is standard in combinatorial number systems. Then, one probes the midpoint of the current interval and discards the half that cannot contain the transition. The number of evaluations of \(f\) is therefore \(\mathcal{O}(\log N)\) per level, and the total cost of recovering one multi-index is \(\mathcal{O}(d\log n)\) evaluations of \(f\), with \(\mathcal{O}(d)\) storage for the output vector.

Our multi-index recovery procedure is especially useful in three computational scenarios.

First, for \emph{random column access}: the ability to retrieve an arbitrary
column \(j\) of the Vandermonde matrix without computing or storing the
preceding columns. With a tensor-product construction, accessing column \(j\)
requires building the univariate matrices and then identifying which of the
\((n+1)^d\) candidate columns corresponds to the \(j\)-th monomial of the
total-degree space. With the present algorithm, the multi-index
\((a_1,\ldots,a_d)\) is recovered directly from \(j\), so any column can be
evaluated on demand.

Second, for \emph{streaming construction}: the ability to process data points
one at a time, as they become available, without assembling the full matrix.
This is relevant when the data arrive sequentially, for instance from a sensor
or a disk read, and cannot all be held in memory. Since the multi-index
\((a_1,\ldots,a_d)\) depends only on the column index \(j\) and not on the
data, the row corresponding to each new point can be evaluated and consumed
immediately.

Third, for \emph{matrix-free evaluation}: the ability to compute the action of
the Vandermonde matrix on a vector without materializing the matrix itself.
This is essential in iterative methods such as Krylov subspace solvers
\cite{Saad2003} or least-squares algorithms, where only products
\(V\mathbf{c}\) and \(V^T\mathbf{z}\) are needed. The present algorithm
provides these products directly: for each column \(j\), the multi-index
\((a_1,\ldots,a_d)\) is recovered, the monomial is evaluated at the relevant
points, and the contribution is accumulated. No dense matrix is ever formed.

In all three scenarios, the common bottleneck is not the evaluation of a
monomial but the identification of the monomial associated with a given
column. The present algorithm resolves this identification directly, without
constructing the full tensor-product candidate array. It does not store a
table of \(\binom{n+d}{d}\) multi-indices, nor does it materialize the full
tensor-product candidate matrix. For comparison, a tensor product of \(d\)
degree-\(n\) univariate matrices has \((n+1)^d\) columns, whereas the required
total-degree matrix has only \(\binom{n+d}{d}\) columns. For instance, with
\(n=3\) and \(d=4\), the tensor product has \(4^4=256\) columns while the
total-degree space has \(\binom{7}{4}=35\).


We conclude this section with an explicit comparison of three procedures for
constructing entries of the multivariate Vandermonde matrix: the multi-index
recovery procedure introduced in Algorithm~\ref{alg:index}, the Kronecker
product construction, and nested loops over the exponent multi-indices. The
last of these is a standard technique for generating multi-indices in
total-degree order; see \cite{deBoor2000} for a detailed treatment of the
indexing problem.

\medskip
\noindent\textbf{Single element \((i,j)\).}

\begin{itemize}
    \item \emph{Multi-index recovery.} Recover \((a_1,\ldots,a_d)\) from \(j\)
    by \(d\) binary searches on \(f\), then evaluate
    \(\prod_{k=1}^d x_{ik}^{a_k}\). The index recovery costs
    \(\mathcal{O}(d\log n)\) evaluations of \(f\). Evaluating the monomial
    costs \(\mathcal{O}(d\log n)\) with binary exponentiation, or
    \(\mathcal{O}(d)\) if the powers \(x_{ik}^m\) are already available.
    Total: \(\mathcal{O}(d\log n)\) per element in the worst case, or
    \(\mathcal{O}(d)\) with precomputed powers.

    \item \emph{Kronecker product.} Suppose $r$ data points. If the univariate matrices
    \[V_{x_1},\ldots,V_{x_d}\] are not yet built, constructing them costs
    \(\mathcal{O}(r\,d\,n)\) which is disproportionate for a single element.
    If they are already available, the entry is the product of \(d\)
    precomputed factors, costing \(\mathcal{O}(d)\) multiplications, but the
    mapping \(j\mapsto(a_1,\ldots,a_d)\) is still required, i.e., the same
    indexing problem that the multi-index recovery procedure solves. Hence,
    for an isolated element, Kronecker offers no advantage: without
    precomputed matrices it is far more expensive, and with them the indexing
    step remains.

        \item \emph{Nested loops.} Nested loops enumerate the monomials in the
    prescribed order by iterating over the exponent of the last variable, then
    the previous one, and so on, recursing on the number of variables.To reach the \(j\)-th monomial, this enumeration must visit all preceding
monomials, so it performs \(\mathcal{O}(j)\) steps. In the worst case,
\(j=\binom{n+d}{d}\), so the cost is
\(\mathcal{O}\bigl(\binom{n+d}{d}\bigr)\). Evaluating the monomial
    then adds \(\mathcal{O}(d\log n)\) with binary exponentiation, or
    \(\mathcal{O}(d)\) with precomputed powers.

    Thus nested loops are competitive with multi-index recovery only when
    \(j\) is small. The reason is that the two methods have different cost
    profiles: nested loops cost \(\mathcal{O}(j)\), proportional to the
    position of the target column, whereas multi-index recovery costs
    \(\mathcal{O}(d\log n)\), independent of \(j\). For small \(j\), counting
    up to \(j\) may be cheaper than performing \(d\) binary searches; for
    large \(j\), the enumeration dominates and the multi-index recovery is
    vastly superior.

    For example, take \(n=3\) and \(d=4\), so that the total number of
    monomials is \(\binom{7}{4}=35\). For \(j=1\), nested loops make a single
    step, while multi-index recovery performs about \(d\log_2 n \approx 4
    \cdot 2 = 8\) comparisons; here nested loops are faster. For
    \(j=35\), the last monomial, nested loops make 35 steps while multi-index
    recovery still performs about 8 comparisons; here multi-index recovery is
    much faster. This is why nested loops are the method of choice when all
    monomials are to be visited in order, but not for random access.
\end{itemize}

\medskip
\noindent\textbf{Full matrix (total-degree columns only).}

\begin{itemize}
    \item \emph{Multi-index recovery.} Precompute the powers \(x_{ik}^m\) for
    \(k=1,\ldots,d\), \(m=0,\ldots,n\), \(i=1,\ldots,r\); this costs
    \(\mathcal{O}(r\,d\,n)\) multiplications. Then, for each of the
    \(\binom{n+d}{d}\) columns, recover \((a_1,\ldots,a_d)\) in
    \(\mathcal{O}(d\log n)\) and evaluate the monomial at all \(r\) points in
    \(\mathcal{O}(r\,d)\) multiplications. The total cost is
    \[
    \mathcal{O}\!\left(r\,d\,\binom{n+d}{d}
    + d\log n\,\binom{n+d}{d}\right),
    \]
    where the precomputation cost \(\mathcal{O}(r\,d\,n)\) has been absorbed
    into the first term, since \(\binom{n+d}{d}\ge n+1\). Since $r$ is tipicaly much larger than $\log n$, the first term dominates, and the per-column index recovery adds a
    lower-order overhead.

    \item \emph{Kronecker product.} Build the \(d\) univariate matrices, which
    costs \(\mathcal{O}(r\,d\,n)\).  Let us use the row-wise Kronecker product (also known as the Khatri--Rao product), which is the
standard tool for constructing multivariate Vandermonde matrices in polynomial
interpolation and regression; see, e.g., \cite{Safak2012, Drmac2020} and the
discussion in \cite{KULeuven}. In particular, the multivariate
Vandermonde matrix is obtained by selecting the rows of the Khatri--Rao product
of the univariate Vandermonde matrices that correspond to monomials of total
degree at most \(n\). To start counting, we first form the full row-wise tensor product of
    \(d\) matrices: the intermediate product of the first \(k\) matrices has
    \(r\,(n+1)^k\) entries, and each step multiplies by one new factor. The
    total cost is
    \[
    \sum_{k=1}^{d} r\,(n+1)^k = \mathcal{O}\!\left(r\,(n+1)^d\right).
    \]
    The full product has \((n+1)^d\) columns, of which only
    \(\binom{n+d}{d}\) belong to the total-degree space. Selecting them costs
    an additional \(\mathcal{O}\!\left(r\,(n+1)^d\right)\) comparisons or
    copies. Total: \(\mathcal{O}\!\left(r\,(n+1)^d\right)\). For large \(d\), this is asymptotically worse than the multi-index recovery,
because \((n+1)^d\) grows exponentially in \(d\), whereas
\(d\binom{n+d}{d}\) grows only polynomially in \(d\) (of degree \(n+1\) for
fixed \(n\)).

        \item \emph{Nested loops.} Enumerate the monomials in the prescribed order
    using \(d\) nested loops (or recursion on the number of variables). Since
    consecutive monomials differ in a single exponent, the value of the current
    monomial at each of the \(r\) points can be updated incrementally: when the
    exponent of \(x_k\) increases by one, multiply the current value by
    \(x_{ik}\); when it decreases, divide by \(x_{ik}\). Each update costs
    \(\mathcal{O}(r)\) operations, and there are \(\binom{n+d}{d}\) monomials,
    giving a total of
    \(\mathcal{O}\!\left(r\,\binom{n+d}{d}\right)\). This avoids both the
    per-column index-recovery overhead of multi-index recovery and the storage
    of precomputed powers. Nested loops are therefore the method of choice when
    the full matrix is required and no random access is needed.
\end{itemize}




\begin{table}[t]
\centering
\caption{Cost comparison of the three strategies, in the best case for each
mode of access. Here \(r\) is the number of data points, \(d\) the number of
variables, \(n\) the total degree bound, and \(j\) the column index.}
\label{tab:cost-comparison}
\begin{tabular}{l c c}
\toprule
Strategy & Single element \((i,j)\) & Full matrix \\
\midrule
Multi-index recovery & \(\mathcal{O}(d\log n)\) & \(\mathcal{O}\!\left(r\,d\,\binom{n+d}{d}\right)\) \\
Kronecker product & \(\mathcal{O}(r\,d\,n)\) & \(\mathcal{O}\!\left(r\,(n+1)^d\right)\) \\
Nested loops & \(\mathcal{O}(j)\) & \(\mathcal{O}\!\left(r\,\binom{n+d}{d}\right)\) \\
\bottomrule
\end{tabular}
\end{table}

In summary, nested loops are the method of choice for the full matrix, while
multi-index recovery is the method of choice for random access, streaming,
and matrix-free evaluation, where the monomial associated with a given column
\(j\) must be identified without enumerating the preceding columns.

\section{Well-posedness and data geometry}
\label{sec:wellposedness}

The multi-index recovery procedure assumes that the columns of the
Vandermonde matrix are linearly independent. If they are not, the
least-squares problem does not have a unique solution, and the conditioning
of the matrix deteriorates. In this section we describe the geometric
conditions under which this happens.

Suppose \(d=3\) and all data points lie on a plane in \(\mathbb{R}^3\). The
restriction of the polynomial space of total degree at most \(n\) to that
plane has dimension \(\binom{n+2}{2}\), not \(\binom{n+3}{3}\). Hence the
Vandermonde matrix, which has \(\binom{n+3}{3}\) columns, cannot have full
column rank: its rank is at most \(\binom{n+2}{2}\), and the least-squares
problem has infinitely many solutions.

More generally, let \(k\) be the smallest degree of a nonzero polynomial that
vanishes at every data point. If \(k>n\), then no nonzero polynomial of total
degree at most \(n\) vanishes at every data point, and the Vandermonde matrix
retains full column rank, provided enough points in general position are
available.

Suppose now that \(k\le n\), and let \(L\) be a polynomial of total degree
\(k\) that vanishes at every data point. For every polynomial \(q\) of total
degree at most \(n-k\), the product \(Lq\) is a nonzero polynomial of total
degree at most \(n\) that vanishes at every data point. Writing
\[
Lq = \sum_{|\boldsymbol{\alpha}|\le n} c_{\boldsymbol{\alpha}}\,
\boldsymbol{x}^{\boldsymbol{\alpha}},
\]
we obtain a linear relation
\[
\sum_{|\boldsymbol{\alpha}|\le n} c_{\boldsymbol{\alpha}}\,
V_{i,\boldsymbol{\alpha}} = 0
\qquad\text{for all }i,
\]
so the coefficient vector \(c\) lies in the kernel of the Vandermonde matrix
\(V\). The number of distinct monomials \(q\) of total degree at most \(n-k\)
in \(d\) variables is
\[
\binom{n-k+d}{d}.
\]
Since the polynomial ring is an integral domain, multiplication by the nonzero
polynomial \(L\) is injective: if \(Lq_1=Lq_2\), then \(L(q_1-q_2)=0\), which
forces \(q_1=q_2\). Hence the products \(Lq\) are distinct and linearly
independent. Therefore the kernel of \(V\) has dimension at least
\[
\binom{n-k+d}{d},
\]
and the rank of \(V\) is at most
\[
\binom{n+d}{d} - \binom{n-k+d}{d}.
\]

This bound is exact when the vanishing ideal of the data is generated by the
single polynomial \(L\), that is, when every polynomial of total degree at most
\(n\) that vanishes at every data point is a multiple of \(L\). In that case
the only such polynomials are
\[
Lq,\qquad \deg q\le n-k,
\]
and there are \(\binom{n-k+d}{d}\) linearly independent ones. Hence
\[
\dim\ker V = \binom{n-k+d}{d},
\]
and the rank of \(V\) is exactly
\[
\binom{n+d}{d}-\binom{n-k+d}{d}.
\]
If there are several independent polynomials of degree \(k\) that vanish at the
data, the multiples of a single \(L\) provide only a lower bound: the kernel may
be larger and the rank smaller. For example, if the data lie on a line in
\(\mathbb{R}^3\), then \(k=1\), but both \(x\) and \(y\) may vanish at the
data, giving a larger kernel than predicted by a single linear polynomial.

This issue is independent of the indexing method: it is a property of the
fitting problem itself. Our procedure remains valid for indexing purposes, but
the user should be aware of the geometric conditions required for a
well-posed fitting problem. In practice, one can either add data points
outside the degenerate set, reduce the degree of the polynomial, or use
regularization to obtain a stable solution.

\section{Basis independence and Chebyshev extension}

The indexing map depends only on the ordered multi-index set, not on the
polynomial basis used to represent the monomials. Let
\(p_0,p_1,\ldots,p_n\) be any family of univariate polynomials with
\(\deg p_s=s\), and define the univariate matrices
\[
(W_{x_k})_{i,t}=p_{t-1}(x_{ik}),\qquad 1\leq t\leq n+1.
\]
Thus \(W_{x_k}\) is the univariate Vandermonde matrix in the basis
\((p_0,\ldots,p_n)\), evaluated at the data points \(x_{ik}\). For a column
\(j\) of the multivariate matrix, let \(\boldsymbol{a}(j)=(a_1(j),\ldots,a_d(j))\)
be the multi-index recovered by Algorithm~\ref{alg:index}, and let
\(m_k(j)=a_k(j)+1\) be the corresponding univariate column indices. Then the
multivariate matrix \(W^{(d)}\) in the basis
\(\{p_{a_1}(x_1)\cdots p_{a_d}(x_d)\}\) satisfies
\[
 W^{(d)}_{i,j}=\prod_{k=1}^d(W_{x_k})_{i,m_k(j)}.
\]
In particular, choosing \(p_s=T_s\), the Chebyshev polynomial of the first
kind, produces the total-degree Chebyshev--Vandermonde matrix with no change to
Algorithm~\ref{alg:index}. This separation is useful because numerical
stability is a basis and sampling issue distinct from combinatorial indexing;
orthogonal polynomial bases are often preferred when conditioning is the main
concern~\cite{BrubeckNakatsukasaTrefethen2021,KuianReichelShiyanovskii2020}.

\section{Conclusions}

We have introduced a multi-index recovery procedure for multivariate
Vandermonde matrices in the total-degree inverse lexicographic ordering. Given a global column
index \(j\), our procedure directly yields the associated univariate column
indices \(m_k(j)\) without enumerating the preceding columns. In two variables
it produces the explicit factors \(m(j)\) and \(\ell(j)\); in arbitrary
dimension it produces the recursively defined map \(\Phi_d\). The proof
establishes the correct exponents block boundaries, the validity of every local index,
and the factorization through univariate evaluation matrices.

Our indexing map depends only on the ordered multi-index set, not on the
polynomial basis. Consequently, the same algorithm applies to the monomial
basis and to alternative bases such as Chebyshev polynomials, where only the
univariate matrices change. This separation is useful because numerical stability depends on the choice of basis and sampling points, whereas the indexing map depends only on the
combinatorial structure of the multi-index set. The two concerns can therefore
be addressed independently.

We have also compared our multi-index recovery procedure with the Kronecker
product construction and with nested loops. For a single element, our
procedure costs \(\mathcal{O}(d\log n)\), whereas Kronecker requires building
the univariate matrices and nested loops require enumerating the preceding
columns. For the full matrix, nested loops with incremental updates achieve
the best asymptotic cost, \(\mathcal{O}(r\binom{n+d}{d})\), while Kronecker
pays the factor \((n+1)^d\) instead of \(d\binom{n+d}{d}\) with the procedure inhere. The advantage of our
procedure is therefore not in the full-matrix construction but in random
access, streaming, and matrix-free evaluation, where the monomial associated
with a given column must be identified without enumerating the preceding
columns.

The main application of our procedure is multivariate polynomial fitting, where
a design matrix with \(\binom{n+d}{d}\) columns must be constructed from
scattered or gridded data. This problem arises in several branches of physics
and engineering. A representative example is the construction of equations of
state for nuclear matter in neutron-star simulations. In that setting, the
thermodynamic quantities are functions of two or more variables, such as
baryon density, proton fraction, and temperature, and they are often
represented by polynomial fits to tabulated data~\cite{Servignat2025}. The
ComPOSE database, for instance, provides such tables, and converting them into
a smooth polynomial form suitable for numerical evolution requires exactly
the kind of multivariate Vandermonde matrix that our procedure
indexes~\cite{ComPOSE}. Similar needs arise in the approximation of
pairwise forces and torques between anisotropic particles, where multivariate
polynomials in the relative position and orientation coordinates are
used~\cite{Fakhraei2026}. In these and other applications, the ability to
recover the multi-index of a given column directly is essential for building
the design matrix efficiently, especially when the number of variables is
large and the tensor-product approach would produce a prohibitive number of
superfluous columns.

A numerical study comparing memory use, wall-clock time, and conditioning for
monomial and orthogonal bases is a natural next step. Another direction is to
extend our multi-index recovery procedure to other polynomial spaces, such as
weighted total-degree spaces (where the degree bound involves weights for each
variable) and simplex-like domains (where the monomial set is not a tensor
product). In both cases, the block structure would change, and a different
combinatorial decomposition would be needed to define the indexing map.

\section*{Acknowledgments}

The author thanks Jérôme Novak for suggesting the problem of multivariate
polynomial fitting in the context of nuclear matter equations of state, and
Dionisio Yáñez for pointing out the connection with the Kronecker product. This work is supported by the Spanish Agencia Estatal de Investigaci\'on (grant PID2024-159689NB-C21) funded by the Ministerio de Ciencia, Innovaci\'on y Universidades. The author also acknowledges support by the Generalitat Valenciana through the Grant ACIF/2019/169.

\bibliographystyle{siamplain}
\bibliography{vandermonde_references}
\end{document}